\newcommand{\ie}{{\it i.e.}}
\newcommand{\etal}{{et~al.}}
\newcommand{\reals}{\mathbb{R}}
\newcommand{\Expect}{\mathop{\bf E{}}}
\newcommand{\norm}[1]{\lVert #1 \rVert}
\newcommand{\nat}{\mathbb{N}}
\newcommand{\Next}{\mathsf{X}}
\newcommand{\Eventually}{\lozenge}
\newcommand{\until}{\mathsf{U}}
\newcommand{\true}{\mathsf{True}}
\newcommand{\supp}{\mathsf{Supp}}
\newcommand{\abs}[1]{\lvert#1\rvert}
\newcommand{\calAP}{\mathcal{AP}}
\newcommand{\calB}{\mathcal{B}}
\newcommand{\calK}{\mathcal{K}}
\newcommand{\indicator}{\mathbf{1}}
\newcommand{\bs}{\mathbf{s}}
\acrodef{rl}[RL]{Reinforcement Learning}
\acrodef{mdp}[MDP]{Markov Decision Process}
\acrodef{ggks}[GGKs]{Geodesic Gaussian Kernels}
\acrodef{ogks}[OGKs]{Ordinary Gaussian Kernels}
\acrodef{sip}[SIP]{Semi-Infinite Programming}
\acrodef{pctl}[PCTL]{Probabilistic Computation Tree Logic}
\acrodef{mc}[MC]{Markov Chain}
\acrodef{lp}[LP]{Linear Programming}
\acrodef{pcl}[PCL]{Path Consistency Learning}
\acrodef{lp}[LP]{Linear Programming}
\acrodef{adp}[ADP]{Approximate Dynamic Programming}
\acrodef{alp}[ALP]{Approximate Linear Programming}
\acrodef{elp}[ELP]{Exact Linear Programming}
\acrodef{ialp}[iALP]{Iterative Approximate Linear Programming}
\acrodef{msbr}[MSBR]{Mean Square Bellman Residual}
\acrodef{dfa}[DFA]{Deterministic Finite-state Automaton}
\acrodef{lqr}[LQR]{Linear Quadratic Regulator}
\acrodef{hjb}[HJB]{Hamilton-Jacobi-Bellman}
\acrodef{kl}[KL]{Kullback-Leibler}
\acrodef{pac}[PAC]{Probably Approximately Correct}
\acrodef{ipi}[iPI]{Iterative Policy Iteration}
\acrodef{cmdps}[CMDPs]{Constrainted Markov Decision Processes}
\acrodef{pomdps}[POMDPs]{Partially Observable Markov Decision Processes}
\acrodef{decmdps}[DECMDPs]{Decentralized Markov Decision Processes}
\acrodef{dp}[DP]{Dynamic Programming}
\acrodef{cdf}[CDF]{Cumulative Distribution Function}
\acrodef{pmf}[PMF]{Probability Mass Function}
\acrodef{decpomdps}[DECPOMDPs]{Decentralized Partially Observable Markov Decision Processes}
\acrodef{ltl}[LTL]{Linear Temproal Logic}
\theoremstyle{definition}
\newtheorem{definition}{Definition}[section]
\newtheorem{theorem}{Theorem}[section]
\newtheorem{lemma}[theorem]{Lemma}
\newtheorem{remark}{Remark}
\newtheorem{problem}{Problem}
\begin{document}

\title{\Large \bf Approximate Dynamic Programming with Probabilistic Temporal Logic Constraints}

\author{Lening~Li and Jie~Fu \thanks{L. Li and J. Fu are with the Robotics Engineering Program, Department of Electrical and Computer Engineering, Worcester Polytechnic Institute, Worcester, MA, 01609, USA, {\tt\small lli4, jfu2@wpi.edu}}}

\maketitle

\begin{abstract}
In this paper, we develop approximate dynamic programming methods for stochastic systems modeled as Markov Decision Processes, given both soft performance criteria and hard constraints in a class of probabilistic temporal logic called Probabilistic Computation Tree Logic (PCTL). Our  approach consists of two steps: First, we show how to transform  a class of PCTL formulas into chance constraints that can be enforced during planning in stochastic systems. Second, by integrating randomized optimization and  entropy-regulated dynamic programming, we devise a novel trajectory sampling-based approximate value iteration method to iteratively solve for an upper bound on the value function while ensuring the constraints that PCTL specifications are satisfied. Particularly, we show that by the on-policy sampling of the trajectories, a tight bound can be achieved between the upper bound given by the approximation and the true value function. The correctness and efficiency of the method are demonstrated using robotic motion planning examples. 
\end{abstract}

\section{Introduction}
For safety-critical systems, one main control objective is to ensure desirable system performance with provable correctness guarantees given high-level system specifications. 
In this work, we study the following problem: Given a stochastic system modeled as a \ac{mdp}, how to \emph{efficiently synthesize} a policy that is optimal with respect to a performance criterion while satisfying safety- and mission-critical constraints in temporal logic?  For stochastic systems with difficult-to-model dynamics, is it possible to \emph{learn} such a policy efficiently from sampled trajectories using a black box physics simulator of the system?

\ac{mdp} planning with temporal logic constraints has been extensively studied. One class of research develops policies that maximize the probability of satisfying given temporal logic specifications \cite{ding2014optimal, Fu-RSS-14,wang2015temporal,fu2015computational,lahijanian2012temporal,lahijanian2011control,sadigh2014learning}. Another class of work considers multiple objectives, including soft constraints---maximizing the total reward---and hard constraints---satisfying safety properties \cite{dimitrova2016robust,junges2016safety}. Among these work,  Wang \etal \cite{wang2015temporal} devised the first \ac{adp} method to solve the problem of maximizing the probability of satisfying temporal logic constraints. The principle of \ac{adp} is to introduce policy function approximation, value function approximation, or both (actor-critic methods) \cite{bertsekas1996dynamic} such that the number of decision variables originally grows linearly in the state space and action space of the \ac{mdp} but becomes independent of the size of the \ac{mdp}. Thus, \ac{adp} methods \cite{bertsekas2008neuro} allow one to address the problem of scalability and the lack of a dynamic model in planning for stochastic systems.

In this work, we develop approximate value iteration---a class of \ac{adp} methods---in \ac{mdp} given both soft performance criteria and hard temporal logic constraints. The hard constraints are given by \ac{pctl} \cite{kwiatkowska2007stochastic}, which is used to reason about properties in stochastic systems, for example, ``the probability is greater than 0.85 that the goal can be reached with a cost less than 100''. Our approach includes two steps: In the first step, we show that a large subclass of \ac{pctl} can be equivalently represented by chance constraints over the path distribution in a stochastic system, with appropriately defined cost functions. We introduce the mixing time for Markov chain to approximately verify properties in time-unbounded \ac{pctl} using trajectories with finite lengths. It is noted that \ac{pctl} involves global properties in a system, which is often hard to enforce using locally optimal policy search methods. In the second step, we develop a chance-constrained approximate value iteration method to solve the planning problem with reward maximization and \ac{pctl} constraint satisfaction. In literature, chance-constrained approximate policy iteration method has been developed \cite{chow2018risk}. We consider approximate value iteration methods to obtain  guarantees for global properties in \ac{pctl}. This is achieved via integrating randomized optimization and approximate linear programming formulation proposed in \cite{de2003linear}. Our sampling-based approximate value iteration method has the following desirable properties: \begin{inparaenum}\item It achieves a tight error bound by weighing the approximation errors over the state space using the state visitation frequencies of an approximately optimal policy; \item It is probabilistic complete, in the sense that it converges to an approximately optimal policy that satisfies the \ac{pctl} constraints with probability one. \end{inparaenum}

The rest of the paper is structured as follows. Section~\ref{sec:preliminaires} provides some preliminaries. Section~\ref{sec:main_result} contains the main results of the paper, including the translation from a subclass of \ac{pctl} to chance constraints in \ac{mdp} and the \ac{adp} algorithm. Presented in section~\ref{sec:exp} are experimental studies with robot motion planning to validate the optimality and correctness of the proposed method. Section~\ref{sec:con} draws a conclusion.

\section{Preliminaries}\label{sec:preliminaires}
Notation: A finite set $X$, $\abs{X}$ is the size of $X$, and $\Delta(X)$ denotes the probability simplex in $\reals^{\abs{X}}$. Given a distribution $\mu \in \Delta(X)$, $\supp(\mu) =\{x\in X\mid \mu(x)\ne 0\}$ is the \emph{support} of $\mu$. $\reals$, $\reals_+$, $\reals_{\ge 0}$, and $\nat$ are the set of reals, positive reals, nonnegative reals, and natural numbers, respectively. $\calAP$ is the set of the atomic propositions.
\subsection{Markov decision processes and the linear programming formulation of \ac{adp}}
We consider stochastic systems modeled by Markov Decision Processes. An \ac{mdp} is a tuple $ M = \langle S, A, P, r, \mu,\gamma \rangle$, where $S$ is a finite set of  states; $A$ is a finite set of  actions;  $P(\cdot \mid s,a) \in \Delta(S)$ is the probability distribution of the next state by taking action $a$ at state $s$;   $r: S \times A \rightarrow \reals$ is the reward function: $r(s,a)$ represents the immediate reward gained at state $s$ taking action $a$; $\mu \in \Delta(S) $ is the initial state distribution;	$\gamma \in (0,1]$ is a discount factor. For $s\in S$, we denote $A(s) =\{a\in A\mid \exists s'\in S, P(s' \mid s,a)>0\}$ the set of \emph{admissible} actions at state $s$.

Given a time step $t \in \nat$, a \emph{history} $h_t = s_0, a_0, s_1,\allowbreak a_1,\ldots s_t$ is a finite sequence of state-action pairs prior to time $t$. Let $H_t$ be the set of all possible histories prior to time $t$. A decision rule $d_t: H_t \rightarrow \Delta(A)$ maps a history $h \in H_t$ to a distribution $d_t(h)$ of admissible actions. Let $\pi = (d_1,d_2,\ldots, d_N)$ denote a randomized history-dependent policy. If $d_i=d_j=d$ for all $i\ne j$ in a policy, then $\pi = (d,d,\ldots, d)$ is a Markovian, randomized policy. We denote $\Pi$ the set of Markov and randomized policies. A policy $\pi $ induces, from an \ac{mdp} $M$, a Markov chain $M^\pi = X_0, A_0, X_1, A_1, X_2, \ldots$, where $X_i,A_i$ are the random variables describing the $i$-th state and action in the chain. We also omit the actions and refer to the $\pi$-induced Markov chain by $M^\pi = \{X_n, n\ge 0\}$. It holds that $X_0 \sim \mu$, $P(A_t=a_t|X_t=s_t) = \pi(a_t|s_0\ldots s_t)$, and $P(X_{t+1} = s_{t+1} \mid X_t = s_t) = \sum_{a \in A} P(s_{t+1} \mid s_t, a_t) \pi(a_t \mid s_0\ldots  s_t)$. We say a state $s\in S$ is a \emph{sink} state if $P(s \mid s,a)=1$ for all $a\in A(s)$.

A path $\rho=s_0s_1s_2\ldots$ is a finite/infinite sequence of
states. Given a policy $\pi$ and path $\rho$, we denote $P^\pi (\rho)$
the probability of the path $\rho$ in the $\pi$-induced Markov
chain. We denote $\mbox{Path}^\pi(s)$ the set of possible paths
starting from state $s$ and following policy $\pi$.

Next, we present the \ac{lp} formulation for optimal planning in \ac{mdp}s \cite{de2003linear}. In planning to maximize an infinite-horizon
discounted reward, we have a value function of a policy defined by
\begin{equation*}
V^{\pi}(\mu) = \Expect  \left[ \sum_{t=0}^{\infty}\gamma^{t}r(X_t, A_t) \mid X_{0}  \sim \mu \right].
\label{eq:criteria}
\end{equation*}
The optimal policy $\pi^\ast$ achieves $\pi^\ast =\arg\max_{\pi \in \Pi} V^\pi(\mu)$. For maximizing an infinite-horizon discounted reward, there exists an optimal, memoryless policy \cite{geibel2006reinforcement}. The \ac{lp} \cite{puterman2014markov} solving $\pi^\ast$ is described as follows: Let $V = [V(s)]_{s\in S}$ be a vector of variables, one for each state.
\begin{equation}
\label{eq:lp}
\begin{split}
\min_{V} \; & c^{\intercal}V,  \\
\mbox{subject to: } & \left[r(s, a) + \gamma \sum_{s'}P(s' \mid s, a)V(s')\right] \leq V(s),  \\
& \forall s \in S, a \in A,
\end{split}
\end{equation}
where $c= [c_1,\ldots, c_{\abs{S}}]^\intercal$ is a vector of nonnegative state-relevance weights, \ie, $c_i\ge 0$, for all $i=1,\ldots, \abs{S}$. Once $V$ is obtained, the optimal policy can be generated using the Bellman equation.

For large-scale \ac{mdp}s, function approximation of $V$ is introduced to find approximate-optimal policy  \cite{de2003linear}. Consider a linear function approximator
\begin{equation*}
V(s;\theta) \approx \sum_{k=1}^{\calK}\phi_{k}(s) \theta_{k} = \Phi\theta,
\label{eq:state_value}
\end{equation*}
where $\phi_{k}(s): S \rightarrow \reals, k = 1, \dots, \calK$ are preselected basis functions and $\theta = [\theta_1, \ldots, \theta_{\calK}]^\intercal \in \reals^{\calK}$ is a weight vector. The approximate \ac{lp} is to substitute $V$ with its function approximator $\Phi\theta$ in \eqref{eq:lp}.
Let $\theta^\ast$ be the solution. An approximate-optimal policy is computed by $\pi(s; \theta^\ast) \coloneqq \arg\max_a \left[r(s, a) + \gamma \sum_{s'}P(s' \mid s, a)V(s';\theta^\ast)\right]$.

\subsection{Specification: Probabilistic Computation Tree Logic (PCTL) with the reachability reward/cost properties} \ac{pctl} provides syntax and semantics to quantify probabilistic properties~\cite{fagin1990logic}.

The syntax of \ac{pctl} with reachability reward/cost properties \cite{kwiatkowska2007stochastic} is defined as follows:
\begin{equation*}
\resizebox{0.5\textwidth}{!}{$
\begin{array}{cccccccccc}
\phi                            & \coloneqq                                     & \multicolumn{1}{c|}{\true} & \multicolumn{1}{c|}{\alpha} & \multicolumn{1}{c|}{\phi \land \phi} &     
\multicolumn{1}{c|}{\neg \phi} & \multicolumn{1}{c|}{P_{\bowtie p}[\psi]} & {C_{\bowtie m}(\Eventually^{\le k } \phi)} ; \\
\\
\psi & \multicolumn{1}{c}{\coloneqq} & 
\multicolumn{1}{c|}{\Next \phi} & \multicolumn{1}{c|}{\phi \until^{\le k} \phi} & {\phi \until \phi} ,        &                             &
\end{array}	$
}
\end{equation*}
where $\phi$ is a state formula and $\psi $ is a path formula,
$\alpha \in \calAP$ is an atomic proposition, $k \in \nat$ is a nonnegative integer, $\bowtie \in \{\le, \ge, >, <\}$, $p\in [0,1]$ is a
probability, and $m \in \reals$. A path formula $\psi$ is interpreted on paths.  $\Next$ is the \emph{next} operator, $\until$ is the \emph{until operator}, $\until^{\le k}$ is the \emph{bounded until} operator, and
$\Eventually \phi \equiv \true \until \phi$ is ``eventually''. $\Next \phi $ asserts that the next state satisfies a state formula $\phi$. $\phi_1 \until^{\le k} \phi _2$ asserts that $\phi_2$ is satisfied within $k$ steps and all preceding states satisfy $\phi_1$.  $\phi_1 \until \phi _2$ asserts that $\phi_2$ is satisfied some time in the future and all preceding states satisfy $\phi_1$.  $\Eventually^{\le k} \phi$ means $\phi$ becomes true in no more than $k$ steps. $P_{\bowtie p}[\psi]$ means that the probability of generating a trajectory that satisfies formula $\psi$ is $\bowtie p $. The reachability reward formula
\cite{kwiatkowska2007stochastic} $C_{\bowtie m}[\Eventually^{\le k} \phi]$ means that the total accumulated rewards/costs along a path of length no greater than $k$ that reaches a state that satisfies $\phi$ are $\bowtie m$, for a predefined reward/cost function. Note that we only
consider time-bounded total cost.

Formally, given a $\pi$-induced Markov chain $M^\pi $ and a \ac{pctl} formula $\phi$, the policy satisfying the constraint $\phi$ given the initial state $s$ is denoted $s \models_\pi \phi$.

For ease of understanding, we give several examples of \ac{pctl} formulas
\begin{itemize}
\item $ P_{\ge 0.95}(\true \until^{\le 10} \mbox{reach goal})$: From any state, the goal can be reached in less than $10$ steps with probability at least $0.95$.
\item $P_{> 0.9}(\Next C_{\le 100} [\Eventually^{\le 5} \alpha])$: ``The probability is larger than 0.9, that from the next step onward, the system reaches a state satisfying $\alpha$ with a total accumulated cost no greater than $100$ in no more than 5 steps.
\end{itemize}

Our goal is to develop an approximate dynamic programming algorithm that solves the following planning problem:
\begin{problem}
\label{problem1}
Given an \ac{mdp} $M=\langle S, A, P, r, d, \mu, \gamma \rangle$ and a \ac{pctl} formula $\phi$, the goal is to find a policy $\pi$ that solves
\[\max_{\pi} V^\pi( \mu), \text{ subject to }
M^\pi \models \phi.
\]
\end{problem}

\section{Main result}
\label{sec:main_result}
Our approach to tackling Problem \ref{problem1} includes two steps: First, we show that a class of \ac{pctl} constraints can be translated into chance constraints. Second, we introduce an approximate value iteration algorithm that solves Problem~\ref{problem1}.

\subsection{Translating \ac{pctl} formulas into chance constraints}
\label{sec:translate}
Chance constraints are introduced to capture \emph{risk-sensitive} optimization criteria in \ac{mdp}s. Let $d:S\times A\rightarrow \reals$ be a cost function. For a policy $\pi$, we define the cost of a state $s$ (resp. state-action pair $(s, a)$) as the sum of finite-horizon (discounted) total costs encountered by the decision-maker when it starts at state $s$ (resp. state-action pair $(s, a)$) and then follows policy $\pi$, as follows:
\[
D(s,a,T;\pi) = \sum_{t=0}^{T-1} \gamma^t d(X_{t}, A_{t})
\mid X_{0} = s, A_{0} = a ; 
\]
\[
D(s,T;\pi) = \sum_{a\in A(s)}D(s,a,T;\pi)\pi(a \mid s),
\]
where $\gamma$ is a discount factor and $T \in \nat \cup\{\infty\}$ is a  stopping time.

Chance-constrained planning in \ac{mdp} aims to  ensure that for a given initial state $X_0$, a confidence level $\beta \in (0, 1)$, and cost tolerance $\alpha$, the policy $\pi$ that satisfies
\begin{equation*}
\Pr(D(X_0,T;\pi) \ge \alpha)\le \beta.
\end{equation*}

Let $\rho=s_0s_1\ldots \in \mbox{Path}^\pi(s)$, we define $
D(\rho;\pi) = \sum_{t=0}^{\abs{\rho}-1} \gamma^t  d(s_{t}, a_{t}).$
Then the chance constraint is equivalently expressed as 
\[\Pr(D(\rho,T;\pi) \ge \alpha)\le \beta, \rho \sim \{X_n, n\ge 0\},
\]
where $\{X_n, n\ge 0\}$ is the  $\pi$-induced Markov chain $M^\pi$. 

We introduce the notion of mixing time and use it later to determine a stopping time $T$ for approximately satisfying  time-unbounded specifications in \ac{pctl}.

\begin{definition}[Mixing time]
Let $M$ be an \ac{mdp} and let $\pi$ be an ergodic policy in $M$. Given $\epsilon \in \reals_+$, the $\epsilon$-return mixing time of $\pi$ is the smallest $T$ such that for all $T'\ge T$ , $\abs{D(s, T'; \pi)  - D(s,T;\pi)} \le \epsilon$.
\end{definition}
Given a policy $\pi$, we can find an upper bound of this mixing time in terms of the second eigenvalue of the state transition $P^\pi(s' \mid s)$ matrix of the Markov chain $M^\pi$ using methods in \cite{sinclair2012algorithms}.

Next, we show how to translate \ac{pctl} formulas into chance constraints. We select the discount factor
$\gamma=1$
unless otherwise specified. The reason is that \ac{pctl}
considers the probability of satisfying path formulas and total cost without discounting. We distinguish three classes of \ac{pctl}.

\subsubsection{Probabilistic formula $P_{\bowtie p}(\psi)$}
We consider a class of probabilistic formula $P_{\bowtie
p}(\psi)$ where $\psi$ is a path formula of the form
$\neg \phi_1 \until \phi_2$, $\true \until \phi$,
or $\Next \phi$, where $\phi,\phi_1, \text{ and } \phi_2$ are propositional
logic formulas. Formally,
\[
s\models_{\pi} P_{\bowtie p}(\psi) \mbox{ iff } P(\{\rho \in \mbox{Path}^\pi(s)) \mid \rho \models \psi\}) \bowtie p.
\]
We show that a formula in this class can be represented by a chance constraint with a properly defined cost function.

\begin{lemma}
\label{lm:next}
Given a formula $P_{\bowtie p}(\Next \phi)$, let's define a
cost function $d:S\times A\rightarrow \reals$ as
$d(s,a) =\Expect_{s' \sim P(\cdot \mid s,a)}
\indicator(s'\models \phi)$.  Given a policy $\pi$, the $\pi$-induced Markov
chain satisfies the formula, denoted
$M^\pi \models P_{\bowtie p}(\Next \phi)$, if
and only if $D(X_0,T;\pi) \bowtie p$ and the stopping time
$T=2$.
\end{lemma}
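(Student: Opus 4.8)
The plan is to reduce the entire biconditional to a single scalar identity: that the accumulated cost $D(X_0,T;\pi)$ with $T=2$ equals exactly the one-step reachability probability $\Pr(X_1 \models \phi)$ in the $\pi$-induced chain with $X_0 \sim \mu$. Since the quantity appearing in the PCTL semantics and the quantity $D(X_0,T;\pi)$ would then be the \emph{same} number, the comparison $\bowtie p$ transfers verbatim and both directions of the ``if and only if'' follow at once. So the real work is establishing this identity, not arguing the two implications separately.

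First I would unfold the semantics of the $\Next$ operator. A path $\rho = s_0 s_1 s_2 \ldots \in \mbox{Path}^\pi(X_0)$ satisfies $\Next \phi$ precisely when its second state satisfies the state formula, i.e.\ $s_1 \models \phi$. Hence $M^\pi \models P_{\bowtie p}(\Next \phi)$ iff $\Pr(\{\rho : s_1 \models \phi\}) = \Pr(X_1 \models \phi) \bowtie p$. Next I would rewrite this one-step probability through the cost function: conditioning on the first action and using $P(A_0 = a \mid X_0=s) = \pi(a \mid s)$,
\[
\Pr(X_1 \models \phi) = \Expect_{X_0 \sim \mu}\!\left[\sum_{a \in A(X_0)} \pi(a \mid X_0)\, \Expect_{s' \sim P(\cdot \mid X_0,a)} \indicator(s' \models \phi)\right] = \Expect[d(X_0,A_0)],
\]
which is exactly the $t=0$ summand of $D(X_0,T;\pi)$, recalling that $\gamma=1$ is fixed for the PCTL translation. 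This step is routine once the definition of $d$ is substituted.

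The hard part will be pinning down the stopping time and showing that nothing contaminates the sum, i.e.\ that $D(X_0,2;\pi) = \Expect[d(X_0,A_0)]$ even though the horizon runs through $t=1$. The intuition driving $T=2$ is that the truth of $\Next \phi$ is decided the instant $X_1$ is revealed, so the minimal path on which the formula is even well defined must contain the two states $s_0,s_1$; the chain need only be run long enough to expose this first transition. The obstacle is that, taken literally, the $t=1$ term $d(X_1,A_1)$ would add a spurious contribution, so I would have to argue that this term is immaterial to the event $\{s_1 \models \phi\}$ and does not enter the scalar compared against $p$. I expect to close this either by the convention that only the transition out of the current state feeds the indicator defining $d$, or by truncating/absorbing the induced chain after the first step into a non-$\phi$ sink so that all costs beyond $t=0$ vanish. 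With $D(X_0,2;\pi) = \Pr(X_1 \models \phi)$ established, substituting into the semantic characterization from the previous step closes both directions of the equivalence.
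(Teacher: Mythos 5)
Your proposal is correct in substance, but there is nothing in the paper to compare it against: the paper states Lemma~\ref{lm:next} with no proof whatsoever (only the until-style lemmas, Lemmas~\ref{lm:trueuntil} and~\ref{lm:notuntil}, come with arguments). Your reduction of the biconditional to the single scalar identity $D(X_0,2;\pi)=\Pr(X_1\models\phi)$ is exactly the argument the surrounding development implies: in its proof of Lemma~\ref{lm:trueuntil} the paper itself reads $D(X_0,T;\pi)$ as a deterministic quantity (``the probability of reaching a state satisfying $\phi$ within $T$ steps''), i.e., as the expected accumulated cost, and your conditioning computation $\Pr(X_1\models\phi)=\Expect[d(X_0,A_0)]$ is the one-step instance of that reading. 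Your observation that here the identity is \emph{exact}---no mixing-time approximation enters---is also precisely why this lemma can be an ``if and only if,'' whereas Lemmas~\ref{lm:trueuntil} and~\ref{lm:notuntil} assert only the ``if'' direction with an $\epsilon$ correction.

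The point you flag as the ``hard part'' is a genuine wrinkle, and you diagnose it correctly: under the paper's definition $D(s,a,T;\pi)=\sum_{t=0}^{T-1}\gamma^{t}d(X_t,A_t)$, the choice $T=2$ literally includes the term $d(X_1,A_1)$, whose expectation is $\Pr(X_2\models\phi)$, and that extra mass would falsify the lemma as stated. Note that the sink-state device used in the until-lemmas would \emph{not} repair this (making $\phi$-states zero-cost sinks still leaves $\Expect[d(X_1,A_1)]=\Pr(X_1\not\models\phi,\,X_2\models\phi)>0$ in general), so it is good that you did not lean on it; your truncation/absorption fix---stop the chain after the first transition, or equivalently read the ``stopping time $T=2$'' as a path of two states $s_0s_1$ and hence a single cost term ($T=1$ in the summation convention)---is the right repair. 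This is a sloppiness in the paper's statement rather than a gap in your argument; with either reading, your identity holds and both directions of the equivalence follow at once.
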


\begin{lemma}
\label{lm:trueuntil}
Given a formula $P_{\bowtie p}(\true \until \phi)$, let's
define a cost function $d:S\times A\rightarrow \reals$ as
$d(s,a) =\Expect_{s' \sim P(\cdot \mid s,a)}\indicator(s'\models
\phi)$ only if $s\not \models
\phi$.  Let all states in the set $\{s \mid s\models\phi\}$ be
sink states. Given a policy $\pi$, the $\pi$-induced Markov chain
satisfies the formula, denoted
$M^\pi \models P_{\bowtie p}(\true \until \phi)$, if it is one
of the following cases:
\begin{itemize}
\item $\bowtie \in \{\ge, >\}$: $ D(X_0,T_\epsilon;\pi) \bowtie p  $.
\item $\bowtie \in \{\le, < \}$: $ D(X_0,T_\epsilon;\pi) \bowtie p -  \epsilon $.
\end{itemize}
where $\epsilon \in (0,1)$ is a small constant and $T_{\epsilon}$ is an upper bound of the $\epsilon$-mixing time of $\pi$.
\end{lemma}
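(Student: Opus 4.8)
The plan is to first show that the unbounded accumulated cost $D(X_0,\infty;\pi)$ coincides exactly with the satisfaction probability $P(\{\rho \in \mbox{Path}^\pi(X_0) \mid \rho \models \true \until \phi\})$, and then to control the truncation error incurred by replacing $\infty$ with the finite stopping time $T_\epsilon$.

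For the first step, let $\tau = \inf\{t \ge 0 \mid X_t \models \phi\}$ be the first hitting time of the set $\{s \mid s \models \phi\}$. Because every $\phi$-state is made a sink state, the events $E_t = \{X_t \not\models \phi,\ X_{t+1} \models \phi\}$ are mutually exclusive for distinct $t$: once the chain enters a $\phi$-state it remains there. Since $d(s,a) = 0$ whenever $s \models \phi$ and $d(s,a) = \Expect_{s' \sim P(\cdot \mid s,a)} \indicator(s' \models \phi)$ otherwise, I would take expectations term by term (using $\gamma = 1$) to obtain $\Expect[d(X_t, A_t)] = P(E_t) = P(\tau = t+1)$. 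Summing over $t = 0,\ldots,T-1$ and using disjointness telescopes to $D(X_0,T;\pi) = P(\tau \le T)$, the probability of reaching $\phi$ within $T$ steps, under the assumption $X_0 \not\models \phi$ (the remaining case being trivial, since then the formula holds with probability one). Letting $T \to \infty$ yields $D(X_0,\infty;\pi) = P(\tau < \infty) = P(\{\rho \mid \rho \models \true \until \phi\})$, the exact satisfaction probability.

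For the second step, I would use that each cost term is nonnegative, so $T \mapsto D(X_0,T;\pi)$ is monotone nondecreasing; combined with the defining property of the $\epsilon$-mixing time (taking $T' \to \infty$), this sandwiches the true probability as $D(X_0,T_\epsilon;\pi) \le D(X_0,\infty;\pi) \le D(X_0,T_\epsilon;\pi) + \epsilon$. A case split on $\bowtie$ then finishes the argument. When $\bowtie \in \{\ge, >\}$, the finite-horizon value underestimates the truth, so $D(X_0,T_\epsilon;\pi) \bowtie p$ already forces $D(X_0,\infty;\pi) \bowtie p$. When $\bowtie \in \{\le, <\}$, the finite-horizon value omits at most $\epsilon$ of probability mass in the tail, so the shifted condition $D(X_0,T_\epsilon;\pi) \bowtie p - \epsilon$, together with $D(X_0,\infty;\pi) \le D(X_0,T_\epsilon;\pi) + \epsilon$, yields $D(X_0,\infty;\pi) \bowtie p$, establishing satisfaction.

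The main obstacle is the first step: rigorously justifying that the per-step expected cost equals the first-entry probability $P(\tau = t+1)$ and that these contributions telescope into the cumulative hitting probability. This hinges on the two modeling choices that zero out $d$ on $\phi$-states and make $\phi$-states absorbing, which together ensure no cost is double-counted after $\phi$ is first reached. I would also handle the boundary case $X_0 \models \phi$ separately and highlight the asymmetry in the error treatment: truncation always loses tail mass, so it is conservative for lower-probability-bound constraints but requires the explicit $\epsilon$ correction for upper-probability-bound constraints.
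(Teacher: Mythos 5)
Your proof is correct and follows essentially the same route as the paper's own: identify the accumulated cost $D$ with the reachability probability, bound the truncation error by $\epsilon$ using the mixing time, and finish with the identical case split on $\bowtie$. The only difference is rigor --- you actually prove the identification $D(X_0,T;\pi)=P(\tau\le T)$ via the first-hitting-time decomposition and derive the sandwich $D(X_0,T_\epsilon;\pi)\le D(X_0,\infty;\pi)\le D(X_0,T_\epsilon;\pi)+\epsilon$ directly from the mixing-time definition, whereas the paper asserts the former ``by definition of the cost function'' and argues the latter informally.
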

\begin{proof}
By definition of cost function $d$, $D(X_0; \pi) $ is the probability of eventually reaching a state satisfying $\phi$; $D(X_0, T; \pi) $ is the probability of reaching a state satisfying $\phi$ within $T$ steps. Given $T_\epsilon$ is the mixing time, if a path of length $T_\epsilon$ has not yet visited a state that satisfies $\phi$, then the probability of satisfying the path formula as we continue along this path is less than $\epsilon$. Since the cost is nonnegative, we have $D(X_0 ;\pi) - D(X_0, T_\epsilon; \pi) \le \epsilon$ for a predefined positive real number $\epsilon$. Next, we consider two cases: Case I: $\bowtie \in \{\ge, >\}$, $P_{\ge p}(\true \until \phi)$ is equivalent to $D(X_0 ;\pi) \ge p$. Given $D(X_0 ;\pi) \ge D(X_0, T_\epsilon; \pi)$, a sufficient condition for $D(X_0 ;\pi) \ge p$ is that $D(X_0, T_\epsilon;\pi) \ge p$. The same argument applies for strictly greater than, \ie, $>$. Case II: $\bowtie \in \{\le, <\}$,  $P_{\le p}(\true \until \phi)$ is equivalent to $D(X_0 ;\pi) \le p$. Given $D(X_0 ;\pi)  - D(X_0, T_\epsilon; \pi) \le \epsilon   $, we have $D(X_0 ;\pi) \le D(X_0, T_\epsilon; \pi)  +\epsilon$. A sufficient condition for $D(X_0 ;\pi) \le p$ is that  $D(X_0, T_\epsilon; \pi)  +\epsilon \le p$, which is equivalent to $D(X_0, T_\epsilon; \pi)   \le p-\epsilon$.  The same argument applies for strictly less than, \ie, $<$.
\end{proof}

\begin{lemma}
\label{lm:notuntil}
Given a formula $P_{\bowtie p}(\neg \phi_1 \until \phi_2)$,
let's define a cost function $d:S\times A\rightarrow \reals$
as
$d(s,a) =\Expect_{s' \sim P(\cdot \mid s,a)}
\indicator(s'\models \phi_2)$ only if
$s\not \models \phi_2$.  Let all states in the set
$\{s \mid s\models\phi_1\lor \phi_2\}$ be sink states. Given a
policy $\pi$, the $\pi$-induced Markov chain satisfies the
formula, denoted
$M^\pi \models P_{\bowtie p}(\neg \phi_1 \until \phi_2)$, if
it is one of the following cases:
\begin{itemize}
\item $\bowtie \in \{\ge, >\}$: $ D(X_0,T_\epsilon;\pi) \bowtie p  $.
\item $\bowtie \in \{\le, < \}$: $ D(X_0,T_\epsilon;\pi) \bowtie p -  \epsilon $.
\end{itemize}
where $T_{\epsilon}$ is an upper bound of the $\epsilon$-mixing time  of $\pi$.
\end{lemma}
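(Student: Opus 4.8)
The plan is to mirror the proof of Lemma~\ref{lm:trueuntil}, since the only structural difference is the extra family of sink states induced by $\phi_1$, which encodes the obligation to avoid $\phi_1$ before reaching $\phi_2$. First I would show that, under the stated cost function and sink-state construction, the unbounded total cost $D(X_0;\pi)$ equals the probability that the $\pi$-induced Markov chain produces a path satisfying $\neg\phi_1\until\phi_2$. Writing $d(X_t,A_t)=\indicator(X_t\not\models\phi_2)\,\Expect_{s'\sim P(\cdot\mid X_t,A_t)}\indicator(s'\models\phi_2)$ and taking the full expectation, the expected one-step cost at time $t$ is exactly $\Pr(X_t\not\models\phi_2,\ X_{t+1}\models\phi_2)$. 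Because every state satisfying $\phi_2$ is a sink, this is the probability of \emph{first} entering a $\phi_2$-state at time $t+1$; these events are disjoint across $t$, so summing telescopes to $\Pr(\text{the chain ever reaches a }\phi_2\text{-state})$.

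Second, I would argue that this reachability probability is precisely the satisfaction probability of $\neg\phi_1\until\phi_2$, using the sink states for $\phi_1$. If a path enters a state satisfying $\phi_1$ but not $\phi_2$, it is trapped there forever and, since that sink contributes zero cost, it never reaches $\phi_2$; hence such paths contribute nothing, correctly reflecting that the until obligation has failed once $\phi_1$ is seen before $\phi_2$. Conversely, any path that does reach $\phi_2$ at its first hitting time $t+1$ cannot have passed through a $\phi_1$-only state earlier, for that would have absorbed it; and a state satisfying $\phi_1\land\phi_2$ is reached as a $\phi_2$-state and is counted, consistent with the until semantics, which only forbids $\phi_1$ \emph{strictly} before the witnessing $\phi_2$-state. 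Since the behaviour of the chain after reaching either a $\phi_1$- or a $\phi_2$-state is irrelevant to the truth value of the until, the sink modification preserves the satisfaction probability, and therefore $D(X_0;\pi)=\Pr(\{\rho\in\mbox{Path}^\pi(X_0)\mid\rho\models\neg\phi_1\until\phi_2\})$.

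Third, the finite-horizon truncation follows verbatim from the mixing-time argument of Lemma~\ref{lm:trueuntil}: the cost is nonnegative, so $D(X_0,T_\epsilon;\pi)\le D(X_0;\pi)$, while the defining property of the $\epsilon$-mixing time gives $D(X_0;\pi)-D(X_0,T_\epsilon;\pi)\le\epsilon$. The case split then closes the argument. For $\bowtie\in\{\ge,>\}$, the condition $D(X_0,T_\epsilon;\pi)\bowtie p$ implies $D(X_0;\pi)\bowtie p$ by monotonicity. For $\bowtie\in\{\le,<\}$, the bound $D(X_0;\pi)\le D(X_0,T_\epsilon;\pi)+\epsilon$ shows that $D(X_0,T_\epsilon;\pi)\le p-\epsilon$ is a sufficient condition for $D(X_0;\pi)\le p$.

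I expect the main obstacle to be the rigorous justification of the first two steps, specifically verifying that the telescoping identity is exact and that the two families of sink states interact correctly. The delicate points are (i) confirming that $d$ is well-defined and equal to zero on all sink states, so that no spurious cost accrues after absorption, and (ii) handling states satisfying both $\phi_1$ and $\phi_2$ so that the reachability event computed by the cost sum coincides exactly with the until semantics rather than an over- or under-approximation. Once this equivalence is secured, the mixing-time truncation and the case analysis are routine.
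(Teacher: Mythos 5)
Your proposal is correct and follows essentially the same route as the paper: the paper's own proof simply invokes the argument of Lemma~\ref{lm:trueuntil} and notes that a path absorbed in a $\phi_1$-sink accrues zero cost thereafter, which is exactly the content of your first two steps, followed by the identical mixing-time truncation and case split. The only difference is that you make explicit (via the first-entry/telescoping identity) the claim that the paper states without proof, namely that $D(X_0;\pi)$ equals the satisfaction probability of $\neg\phi_1\until\phi_2$ under the sink-state construction.
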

\begin{proof}
The proof is similar to that of Lemma~\ref{lm:trueuntil}. It is noted that when a system reaches a state satisfying $\phi_1$ prior to reaching a state satisfying $\phi_2$, it receives a cost of $0$ even if the path continues to evolve until the time bound $T_\epsilon$.
\end{proof}
\subsubsection{Risk neutral cost constraint $C_{\bowtie m} \Eventually^{\le k} \phi$}
$C_{\bowtie m}\Eventually^{\le k} \phi$ is the expected cost
of paths satisfying $\Eventually^{\le k} \phi$.  Based on the
definition in \cite{kwiatkowska2007stochastic}, a policy $\pi$
satisfying the constraint given an initial state $s$, denoted
$ s\models_\pi C_{\bowtie m} \Eventually ^{\le k}\phi $, if
\[
\Expect\limits_{\pi}(s, X_{\Eventually ^{\le k}\phi} ) \bowtie m,
\]
where $\Expect\limits_{\pi}(s, X_{\Eventually ^{\le k}\phi} )
$ denotes the expectation of the random variable
$ X_{\Eventually ^{\le k}\phi} : \mbox{Path}^\pi(s)
\rightarrow \reals_{\ge 0}$ with respect to the Markov chain
$M^\pi$.
Let $d:S\times A\rightarrow \reals$ be defined by
$d(s,a) =\Expect_{s' \sim P(\cdot \mid s,a)} \indicator(s'\models
\phi)$.  
For any state-action sequences 
$\rho=s_0a_0 s_1a_1\ldots \in  \mbox{Path}^\pi(s) $,
$ X _{\Eventually ^{\le k}}(\rho) = D(\rho; \pi) =
\left(\sum_{t=0}^{T} d(s_{t}, a_{t})\right) + \bar D
\indicator(s_T\not \models \phi), $ where
$T=\min(k, \min\{j |s_j\models \phi\})$ and $\bar D \gg 0$ is
a penalty term, which is added when the path does not satisfy
the specification $\Eventually ^{\le k}\phi$. Note that
instead of assigning a cost of $\infty$ to a path that fails
to satisfy $C_{\bowtie m}\Eventually^{\le k} \phi$ in
\cite{kwiatkowska2007stochastic}, we use a large penalty to
ensure the planning problem is well-defined.

\subsubsection{Risk sensitive \ac{pctl}} We consider a class of \ac{pctl} of the following from: $\phi_1 \implies  \Pr_{\bowtie_1 p}( \Next C_{\bowtie_2 m} \Eventually^{\le k}\phi_2)$ where $\phi_1$ and $\phi_2$ are propositional logic formulas, \ie, state formulas that use only conjunction and negation with atomic propositions in $\calAP$. The \emph{risk-sensitive} semantics of the formula means that if a Markov chain satisfies the formula, then starting from  a state that satisfies $\phi_1$, in the next step the probability of a sampled path that satisfies $\Eventually^{\le k} \phi_2 $ with a cost $\bowtie_2 m$ is $\bowtie_1 p$. Formally, it is defined by
\begin{multline*}
s\models_{\pi} \left( \phi_1 \implies  \Pr_{\bowtie_1 p}( \Next C_{\bowtie_2 m} \Eventually^{\le k}\phi_2)\right)
\mbox{ iff } s\not \models \phi_1\\
\text{ or } s \models \phi_1 \text{ and } \Pr (\Next  C_{\bowtie_2 m}\Eventually^{\le {k}} \phi_2) \bowtie_1 p.
\end{multline*}

\begin{lemma} Given $\phi \coloneqq \phi_1 \implies  \Pr_{\bowtie_1 p}( \Next C_{\bowtie_2 m} \Eventually^{\le k}\phi_2)$, let's define a cost function $d:S\times A\rightarrow \reals$ as $d(s,a) =\Expect_{s' \sim P(\cdot \mid s,a)} \indicator(s'\models \phi_2)$. 
Given a policy $\pi$, the $\pi$-induced Markov chain satisfies the formula, denoted $M^\pi \models  \phi$, if  
\[
P(D(s, k+1;\pi) \bowtie_2 m ) \bowtie_1 p, \quad \forall s \models \phi_1,
\]
where $D(s, k+1;\pi) 
=\left(\sum_{t=1}^{T} d(s_{t}, a_{t})\right)  + \bar D \indicator(s_T\not \models \phi_2)$, $T=\min(k+1, \min\{j |s_j\models \phi_2\})$, and $\bar D \gg 0$ is a penalty.
\end{lemma}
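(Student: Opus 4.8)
The plan is to verify the semantic condition $s \models_\pi \phi$ state by state and reduce it to the stated chance constraint. First I would dispose of the trivial direction: for every $s \not\models \phi_1$ the formula $\phi$ is vacuously satisfied by the defining semantics of the implication, so only states with $s \models \phi_1$ require an argument. Fix such a state $s = s_0$ and consider the path distribution $\mbox{Path}^\pi(s_0)$ induced by $\pi$. The $\Next$ operator shifts the evaluation of $C_{\bowtie_2 m}\Eventually^{\le k}\phi_2$ to the successor $s_1$; accordingly, the per-path cost should begin accumulating at $t=1$ and the horizon ``within $k$ steps from $s_1$'' should correspond to stopping no later than step $k+1$ measured from $s_0$. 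This is exactly what the definition of $D(s,k+1;\pi)$ encodes through the sum $\sum_{t=1}^{T} d(s_t,a_t)$ and the stopping time $T = \min(k+1, \min\{j \mid s_j \models \phi_2\})$.

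The core of the argument is to show that, path by path, the random variable $D(s,k+1;\pi)$ equals the per-path cumulative cost of reaching $\phi_2$ from the next step, under the cost $d(s,a)=\Expect_{s'\sim P(\cdot\mid s,a)}\indicator(s'\models\phi_2)$, augmented by the penalty $\bar D$ precisely on those sample paths that fail to reach $\phi_2$ within the bound. Since $T$ stops at the first visit to $\phi_2$ when it occurs before $k+1$, on a reaching path we have $s_T \models \phi_2$, the term $\indicator(s_T\not\models\phi_2)$ vanishes, and $D(s,k+1;\pi)$ reduces to the accumulated cost; on a non-reaching path $s_T \not\models \phi_2$ and $D(s,k+1;\pi)$ carries the additive penalty $\bar D$. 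I would then establish the event-level identity $\{D(s,k+1;\pi)\bowtie_2 m\} = \{\rho \in \mbox{Path}^\pi(s) \mid \rho \models \Next C_{\bowtie_2 m}\Eventually^{\le k}\phi_2\}$ under the risk-sensitive reading of $C_{\bowtie_2 m}$ as a per-path cost constraint.

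The step I expect to be the main obstacle is reconciling the finite penalty $\bar D$ with the standard convention that an unreached target carries infinite cost, so that the event $\{D(s,k+1;\pi)\bowtie_2 m\}$ matches the cost-reachability event for both orientations of $\bowtie_2$. Because $d \ge 0$, the accumulated cost along any prefix is nonnegative and bounded, so choosing $\bar D$ strictly larger than $m$ (the ``$\bar D \gg 0$'' hypothesis) guarantees that every non-reaching path satisfies $D(s,k+1;\pi) > m$. Consequently, for $\bowtie_2 \in \{\le,<\}$ the penalty excludes exactly the non-reaching paths from $\{D(s,k+1;\pi)\bowtie_2 m\}$, matching the reading in which an unreached target violates an upper-bounded cost; while for $\bowtie_2 \in \{\ge,>\}$ those paths are included, matching the convention in which an unreached target trivially satisfies a lower-bounded cost. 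In both cases the event coincides with the set of next-step paths satisfying $C_{\bowtie_2 m}\Eventually^{\le k}\phi_2$, and care is needed throughout to track the $+1$ offset introduced by $\Next$ consistently in both the cost sum and the horizon.

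Finally I would lift the event identity to probabilities: taking $P$ over $\mbox{Path}^\pi(s)$ gives $P(D(s,k+1;\pi)\bowtie_2 m) = \Pr(\Next C_{\bowtie_2 m}\Eventually^{\le k}\phi_2)$, so the hypothesis $P(D(s,k+1;\pi)\bowtie_2 m)\bowtie_1 p$ is precisely $\Pr(\Next C_{\bowtie_2 m}\Eventually^{\le k}\phi_2)\bowtie_1 p$. Since this holds for every $s\models\phi_1$, and the formula is vacuously true at every $s\not\models\phi_1$, the defining semantics yield $s\models_\pi\phi$ for all $s$, i.e. $M^\pi\models\phi$, as claimed.
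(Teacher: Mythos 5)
The paper offers no proof to compare against (it says only ``by construction and omitted''), so the question is whether your construction is sound. It is the intended one --- vacuous satisfaction at states with $s \not\models \phi_1$, a path-by-path identification of $D(s,k+1;\pi)$ with the $\Next$-shifted cost of reaching $\phi_2$, and a lift of the event identity to probabilities --- and for $\bowtie_2 \in \{\le, <\}$ your argument is correct: since $d \ge 0$ and $\bar D > m$, every path that fails to reach $\phi_2$ within the horizon has $D > m$, so $\{D(s,k+1;\pi) \bowtie_2 m\}$ coincides exactly with the set of paths that reach $\phi_2$ from the next step onward with accumulated cost $\bowtie_2 m$, which is the event in the paper's risk-sensitive semantics. (One shared ambiguity worth noting: whether $\min\{j \mid s_j \models \phi_2\}$ ranges over $j \ge 1$ or $j \ge 0$ matters when $s_0$ itself satisfies $\phi_2$; the lemma statement does not resolve this and neither do you.)

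The genuine gap is the lower-bounded case $\bowtie_2 \in \{\ge, >\}$. The paper's stated semantics is a \emph{conjunction}: the event consists of paths that satisfy $\Eventually^{\le k}\phi_2$ (after the $\Next$ shift) \emph{and} have cost $\bowtie_2 m$; a non-reaching path lies outside this event no matter how its cost is bookkept. Your proof instead places every non-reaching path inside $\{D \ge m\}$ (the penalty forces $D \ge \bar D > m$) and declares this a match with ``the convention in which an unreached target trivially satisfies a lower-bounded cost.'' That convention --- cost $\infty$ on non-reaching paths, for which $\bar D$ is a finite surrogate, as the paper indeed adopts for the risk-neutral formula --- is \emph{not} the semantics the paper defines for this risk-sensitive formula, and the two readings genuinely disagree: take a policy under which no path from some $s \models \phi_1$ ever reaches $\phi_2$ within $k+1$ steps. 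Then $P(D(s,k+1;\pi) \ge m) = 1$, so the lemma's hypothesis holds for $\bowtie_1 \in \{\ge, >\}$ and any $p$, yet the probability of the semantic event is $0$, so the claimed conclusion $M^\pi \models \phi$ fails for any $p > 0$. Hence your event-level identity, asserted ``in both cases,'' is false under the paper's own definition, and the sufficiency claim collapses with it. The proof (and arguably the lemma itself) is correct only if either you restrict to $\bowtie_2 \in \{\le, <\}$ --- the case actually exercised in the paper's experiments, e.g.\ $C_{\le 13}$ --- or you explicitly redefine the per-path cost of a non-reaching path to be $\bar D$ (equivalently $\infty$) as part of the semantics of $C_{\bowtie_2 m}$. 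That redefinition is a substantive assumption and should be surfaced as such, not buried in the word ``convention.''
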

The proof is by construction and omitted.
Further, let $Y=\{s\mid s\models \phi_1\}$, the constraint is 
\[
P(D(s,k+1;\pi) \bowtie_2 m)\bowtie_1 p, \mbox{ for all } s\in Y.
\]



So far, we have shown that any \ac{pctl} formula in the three subclasses  can be represented as a chance constraint with an appropriate definition
of the cost function $d:S\times A\rightarrow \reals$ of the following form:	for some $\alpha \in \reals$ and $\beta \in [0,1]$, $k$
\begin{equation*}
\Pr(D(s ,k; \pi) \ge \alpha )\le \beta\;,  \quad \forall s\in Y,
\end{equation*}
where $k$ is a positive integer and $Y\subseteq S$. 

\begin{remark} It is noted  $D(s,k;\pi) < \alpha$ is a special case of chance constraint and can be expressed by $\Pr(D(s ,k; \pi) \ge \alpha )\le 0$. 
The conjunction of multiple \ac{pctl}s can be translated into multiple chance constraints. 
\end{remark}

\subsection{Formulating stochastic programming for \ac{pctl} constrained optimal planning}
\label{section:adp} 
To develop  an \ac{adp} method, we first replace the hardmax operator with the softmax operator~\cite{sutton1998reinforcement}, defined by
\begin{multline}
\calB V(s) =     \\                                                                                        
\tau \log \sum_{a} \exp\left\{\left(r(s, a) + \gamma \sum_{s'}P(s' \mid s, a)V(s')\right) / \tau\right\},	\label{eq:softmax_backup}                                                                                			\end{multline} 
where $\tau > 0$ is a predefined temperature parameter. With the $\tau$ approaches $0$, Eq.~\eqref{eq:softmax_backup} recovers the hardmax Bellman operator. The softmax Bellman operator is contracting \cite{sutton1998reinforcement}. Given $V(\cdot; \theta)$ the value function parameterized by $\theta$, one can obtain the corresponding state action function $Q(\cdot; \theta)$ and policy $\pi(\cdot; \theta)$ as:
\begin{subequations}	\label{eq:Q-policy}
\begin{align}
& Q(s,a; \theta) = r(s, a) + \gamma \sum_{s'}P(s' \mid s, a) V(s'; \theta) \label{eq:Q}, \\
& \pi(a \mid s; \theta) = \exp((Q(s,a; \theta)- V(s; \theta))/\tau). \label{eq:policy}
\end{align}
\end{subequations}
We denote $V^\ast$ the fixed point of the softmax operator, \ie, $\calB V^\ast =V^\ast$.

Due to the monotonic contraction property of the softmax Bellman operator $\calB$, it can be shown that for any value function $V$ that satisfies $\calB V \le V$, $V$ is an upper bound of the value function $V^\ast$. 	Given a  value function approximation $V(s;\theta)=  \Phi(s)\theta$, the goal is to search for a function parameter $\theta \in \reals^\calK $ that solves the following optimization problem:
\begin{subequations}
\label{eq:adp}
\begin{align}
\min_\theta         & \sum_{s \in S} c(s)V(s;  \theta), \nonumber                                                 \\
\mbox{subject to: } & \calB V(s;\theta) - V(s; \theta) \leq 0 , \quad \forall s\in S,  \label{eq:value}           \\
& \Pr(D(s, k; \theta) \geq \alpha ) - \beta \leq 0 ,  \quad \forall s\in Y. \label{eq:chance} 
\end{align}
\end{subequations}

Next, we devise a trajectory-based value iteration algorithm for solving chance-constrained \ac{mdp}s. Our approach is based on randomized optimization \cite{tempo2012randomized} that iteratively searches for, in the value function parameter space, an optimal parameter that minimizes a weighted distance between the upper bound given by the value function approximation and the true softmax value function.


We first introduce a continuous function $B: \reals \rightarrow \reals_{+}$  with support equal to 
$(0, \infty)$, in the sense that
\begin{align*}
& B(x) = 0 \text{ for all } x \in (-\infty, 0]  \text{, and }                                \\
& B(x) >0  \text{ for all } x \in (0, \infty).
\end{align*}
One such function is $B(x) = \max\{x,0\}$. 
Let 
\[ g(s; \theta) =\calB V(s; \theta)- V(s; \theta), \forall s\in S.\] 
Then, the constraints in \eqref{eq:value} become $g(s; \theta) \le 0 $ for all $s\in S$. 
Using randomized optimization \cite{tadic2006randomized}, an equivalent representation of the set of constraints in \eqref{eq:value} is	\begin{equation*}
\Expect \limits_{ \bs \sim \Delta_1} B(g(\bs; \theta))=0,
\end{equation*}
where $\bs$ is a random variable with a distribution $\Delta_1$ whose support is $S$. Similarly, let
\[
\ell(s; \theta) = \Pr(D(s,K; \theta) \geq \alpha ) - \beta, \forall s\in Y.\] The equivalent representation of \eqref{eq:chance} is
\[
\Expect \limits_{ \bs\sim \Delta_2} B(\ell(\bs; \theta))=0,
\]
where  $\bs$ is a random variable with a distribution $\Delta_2$ over $Y$. 

Thus, \eqref{eq:adp} is equivalent to:
\begin{align}
\label{eq:rand-opt}
\begin{split}
\min_\theta         & \sum_{s\in S} c(s) V(s; \theta),                   \\
\mbox{subject to: } & \Expect_{\bs \sim \Delta_1} B(g(\bs; \theta))=0,      	 \Expect_{\bs \sim \Delta_2} B(\ell(\bs; \theta)) = 0. 
\end{split}
\end{align}

\noindent \paragraph*{The choice of weights: } We choose  the state relevant weight $c(s) = c(s; \theta)$ to be the frequency 
with which different states are expected to be visited in the chain under policy $\pi(\cdot;\theta)$, which is computed from $V(\cdot;\theta)$ using \eqref{eq:Q-policy}.
To justify the choice of this state relevant weight, it is noted that in the absence of chance constraints, we have the following optimization problem:	
\begin{align}
\label{eq:rand-opt-no-chance}
\begin{split}
\min_\theta         & \sum_{s\in S} c(s ; \theta) V(s; \theta),    
\mbox{ subject to: } \Expect_{\bs \sim \Delta_1} B(g(\bs; \theta))=0. 
\end{split}
\end{align}

The following result has been proved in \cite{de2003linear} and rephrased with softmax Bellman operator.
\begin{lemma}\cite{de2003linear}
A vector $\theta^\ast$  solves \begin{align*}
\begin{split}
\min_\theta & \sum_{s \in S} c(s)\Phi\theta  
\mbox{ subject to: }  \calB \Phi\theta  - \Phi\theta   \leq 0,
\end{split}
\end{align*}
if and only if it solves 
\begin{align*}
\begin{split}
\min_\theta         & \sum_{s \in S} \norm{V^\ast - \Phi\theta}_{1,c},
\mbox{ subject to: } \calB \Phi\theta  - \Phi\theta   \leq 0,
\end{split}
\end{align*}
where $ \norm{V^\ast - \Phi\theta}_{1,c} = \Expect_{\bs\sim c} \abs{ \Phi(\bs)\theta - V^\ast(\bs; \theta)}$.
\end{lemma}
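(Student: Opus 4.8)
The plan is to exploit the structural fact, already recorded just above the lemma, that the Bellman feasibility constraint forces any feasible approximate value function to dominate the true value function $V^\ast$ at every state. Once that pointwise inequality is in hand, the weighted $\ell_1$ distance to $V^\ast$ collapses into an affine function of the linear objective $\sum_s c(s)\Phi(s)\theta$, and the equivalence of the two minimization problems becomes immediate because they differ only by a constant on the common feasible set.

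First I would make the pointwise bound precise. Writing $V=\Phi\theta$, feasibility means $\calB V - V \le 0$, i.e. $\calB V \le V$. Using the monotonicity of $\calB$ one gets $\calB^{n+1}V \le \calB^n V$ for every $n$, so the sequence $\{\calB^n V\}_{n\ge 0}$ is nonincreasing. Since $\calB$ is a contraction with unique fixed point $V^\ast$ (so $\calB^n V \to V^\ast$), and a nonincreasing convergent sequence has every term dominating its limit, each term—in particular $V=\calB^0 V$ itself—satisfies $V \ge V^\ast$. Hence $\Phi(s)\theta \ge V^\ast(s)$ for all $s\in S$ and every feasible $\theta$.

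Next I would use this sign information to discharge the absolute value in the norm. For any feasible $\theta$ and each state $s$ we have $\Phi(s)\theta - V^\ast(s) \ge 0$, so $\abs{\Phi(s)\theta - V^\ast(s)} = \Phi(s)\theta - V^\ast(s)$, and therefore
\[
\norm{V^\ast - \Phi\theta}_{1,c} = \sum_{s\in S} c(s)\bigl(\Phi(s)\theta - V^\ast(s)\bigr) = \sum_{s\in S} c(s)\Phi(s)\theta - \sum_{s\in S} c(s)V^\ast(s).
\]
The second sum is a constant that does not depend on $\theta$.

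Finally I would conclude that, over the common feasible region $\{\theta : \calB\Phi\theta - \Phi\theta \le 0\}$, the two objectives $\sum_s c(s)\Phi(s)\theta$ and $\norm{V^\ast - \Phi\theta}_{1,c}$ differ only by the fixed additive constant $\sum_s c(s)V^\ast(s)$; consequently they attain their minima at exactly the same set of parameters $\theta^\ast$, which establishes the ``if and only if'' in both directions at once. I expect the only substantive step to be the pointwise inequality $\Phi\theta \ge V^\ast$ in the second paragraph—everything after it is bookkeeping. A small point worth flagging is that this inequality must hold at \emph{every} state, not merely where $c(s)>0$, so that the absolute value can be dropped termwise; the monotone-convergence argument delivers precisely this state-wise bound, so no gap arises even if $c$ has zero entries or is not normalized.
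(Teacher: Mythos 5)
Your proof is correct and is essentially the same argument the paper relies on (it cites de Farias and Van Roy for this lemma, and the remark immediately preceding it—that $\calB V \le V$ implies $V \ge V^\ast$ by the monotone contraction property—is precisely your key step). Given that pointwise bound, dropping the absolute value and observing the two objectives differ by the constant $\sum_{s} c(s)V^\ast(s)$ on the common feasible set settles the equivalence exactly as intended.
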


For any state relevant weight $c\in \Delta(S)$, it holds that $\min_\theta  \sum_{s \in S} \norm{V^\ast - \Phi\theta}_{1,c} \ge \min_\theta  \sum_{s \in S} \norm{V^\ast - \Phi\theta}_{1,c(\cdot,\theta)}$ where $c(\cdot, \theta)$ is defined earlier. According to Theorem~1 of \cite{de2003linear}, the ideal weight is to choose $c$ that captures the (discounted) frequency
with which different states are expected to be visited. For a given parameter $\theta$, the weight function $c(\cdot, \theta)$ can be obtained from on-policy sampling of   trajectories with $\pi(\cdot;\theta)$.

After formulating the optimization problem into stochastic programming problem, the augmented Lagrangian function of \eqref{eq:rand-opt} is
\begin{align}
\begin{split}
& L_{\nu}(\theta, \lambda, \xi) = \sum_{s\in S} c(s; \theta) V(s; \theta) + \lambda \cdot \Expect\limits_{\bs\sim \Delta_1} B(g(\bs; \theta))      \\
& + \frac{\nu}{2} \cdot \abs{\Expect\limits_{\bs \sim \Delta_1}B(g(\bs; \theta))}^2 + \xi \cdot \Expect\limits_{\bs\sim \Delta_2} B(\ell(\bs; \theta)) \\
& + \frac{\nu}{2} \cdot \abs{\Expect\limits_{\bs\sim \Delta_2} B(\ell(\bs ; \theta))}^2,
\end{split}                                                       
\end{align}
where $\lambda$ and $\xi$ are the Lagrange multipliers and $ \nu$ is a large penalty constant.
Following the Quadratic Penalty function method  \cite{bertsekas1999nonlinear}, an optimal solution of \eqref{eq:rand-opt} consists of solving a sequence of $\emph{inner}$ optimization problems of the form: 
\begin{equation}	\label{eq:inner}
\min_{\theta \in \reals^\calK} \;    L_{\nu^k}(\theta, \lambda^k, \xi^k),
\end{equation}
where $\{\lambda^k\}$ and $\{\xi^k\}$ are sequences in $\reals$, $\{\nu^k\}$ is a positive penalty parameter sequence, and $\calK$ is the size of $\theta$. 

\subsection{Trajectory Sampling-based Approximate Value Iteration}
\label{sec:avi}
In this subsection, we devise a sampling-based method for solving \eqref{eq:rand-opt}. It is achieved by showing that the gradient of $L_{\nu^k}(\theta^k, \lambda^k, \xi^k)$ can be computed from sampled trajectories. 

With a slight abuse of notation, let $M^{\theta}$ be a Markov chain induced by policy $\pi(\cdot; \theta)$. By selecting $c(s;\theta) =\sum_{t=0}^\infty \Pr(X_t=s)$ the state visitation frequency in the Markov chain $M^\theta$, for an arbitrary function $f:S\rightarrow \reals$,  it holds that
\begin{equation}
\underset{s \in S}{\sum}c(s; \theta)f(s;\theta) = \int p(h; \theta)f(h;\theta)dh,
\end{equation}
where $p(h; \theta)$ is the probability of path $h$ in the Markov chain $M^\theta$, $f(h;\theta) = \sum_{i=1}^{|h|}f(s_{i};\theta)$.

Furthermore, by selecting $\Delta_1 \propto c(\cdot;\theta)$ and letting $f(s;\theta) = V(s; \theta) + \lambda^k \cdot B( g(s; \theta)) + \frac{\nu^k}{2} \cdot \abs{B(g(s; \theta))}^2$, and $m(\Delta_2; \theta) =\xi ^k \cdot \Expect\limits_{\bs\sim \Delta_2} B(\ell(\bs; \theta)) +\frac{\nu^k}{2} \cdot \abs{\Expect\limits_{\bs\sim \Delta_2} B(\ell(\bs ; \theta))}^2$, the $k$-th objective function  in \eqref{eq:inner} becomes
\begin{equation*}
\min_{\theta} \underbrace{ \int p(h; \theta)f(h)dh}_ {F(\theta)} + m(\Delta_2 ; \theta).
\end{equation*} 

Using the gradient descent,  parameter $\theta$ is updated by
\begin{equation*}
\theta^{j+1} \leftarrow \theta^{j}  - \eta_1 \cdot \nabla_{\theta} F(\theta)  - \eta_2 \cdot \nabla_{\theta} m(\Delta_2; \theta),
\end{equation*}
where $j$ represent the $j$-th inner iteration, $\eta_1 \text{ and } \eta_2$ are positive step sizes. 
\begin{equation*}
\nabla_\theta F(\theta)  = \int \underbrace{\nabla_\theta p(h; \theta) f(h; \theta) dh}_{1}  + \int \underbrace{ p(h; \theta) \nabla_\theta f(h; \theta)dh}_{2},
\end{equation*}
where
\begin{align*}
1=      & \int \nabla_\theta p(h; \theta) f(h; \theta)dh                                                                                \\
=       & \int p(h; \theta)\nabla_\theta \log p(h; \theta)f(h; \theta)dh                                                                \\
=       & \int p(h; \theta) \bigl[\sum_{t=0}^{\abs{h}} \nabla_\theta \log \pi(a_t \mid s_t; \theta)\bigr] f(h; \theta)dh                      \\
\approx & \frac{1}{N_h} \sum_{h \sim p(h; \theta)} \bigl[\sum_{t=0}^{\abs{h}} \nabla_\theta \log \pi(a_t \mid s_t; \theta)\bigr] f(h; \theta). \\
&(\text{Monte-Carlo approximation})
\end{align*}

Variance reduction \cite{williams1992simple} can be employed to reduce the error variance in the Monte-Carlo approximation.

\begin{align*}
2 = & \int p(h; \theta) \nabla_\theta f(h; \theta)dh                           
= \int p(h; \theta) [\sum_{t = 0}^{\abs{h}} \nabla_\theta f(s_t; \theta)] dh \\
\approx & \frac{1}{N_h} \sum_{h \sim p(s_t; \theta)} \bigl[\sum_{t = 0}^{\abs{h}} \nabla_\theta f(s_t; \theta) \bigr],
\end{align*}
where $ \nabla_\theta f(s_t; \theta)= \nabla_{\theta} V(s_t; \theta) +\lambda^k \cdot \nabla_{g}B(g(s_t; \theta)) \nabla_{\theta}g(s_t; \theta) + \nu^k \cdot B(g(s_t; \theta) \nabla_{\theta}g(s_t; \theta).$	
Note if $B(x) = \max\{x,0\}$, then  $\nabla_{g}B(x) = \begin{cases} 1 &\mbox{if } x> 0, \\ 0 & \mbox{otherwise.} \end{cases}$

In term of the derivative of the second term:
\begin{align*}
\nabla_{\theta} m (\Delta_2; \theta) & = \xi^{k} \cdot \Expect\limits_{\bs\sim \Delta_2} \nabla_{\ell}  B(\ell(\bs; \theta)) \nabla_{\theta} \ell(\bs; \theta)                \\
& + \nu^{k} \Expect\limits_{\bs\sim \Delta_2} \cdot B(\ell(\bs; \theta)) \nabla_{\ell}  B(\ell(\bs; \theta)) \nabla_{\theta}\ell(\bs ; \theta).
\end{align*}

We generate a set $Z$ of trajectories starting at a state with the initial distribution $\Delta_2$ to estimate the gradient. A simple choice of $\Delta_2$ is a uniform distribution over $Y$. Let $Z(s)$ be a set of trajectories in $Z$ with the initial state $s$ and $D(z)$ be the total cost along the trajectory $z$, we have
\begin{align*}
\begin{split}
\nabla_{\theta}\ell(s; \theta)
& \approx \nabla_{\theta}\left[ \sum_{z\in Z(s)} p(z; \theta ) \indicator \{D(z) \geq \alpha \} - \beta\right]      \\
& = \sum_{z\in Z(s)} \nabla_{\theta}p(z; \theta ) \indicator \{D(z) \geq \beta \}                                        \\
& = \sum_{z \in Z(s)} p(z; \theta \mid s) \nabla_{\theta} \log{p(z; \theta )} \indicator \{D(z) \geq \alpha \}             \\
& \approx \frac{1}{|Z(s)|} \sum_{z\in Z(s)} \nabla_{\theta} \log{p(z; \theta  )} \indicator \{D(z) \geq \alpha \},            \\
\end{split}
\end{align*}
where $\nabla_{\theta} \log p(z; \theta  ) = \sum_{t=0}^{\abs{z}} \nabla_\theta \log \pi(a_t \mid s_t; \theta)$.
\begin{align*}
\begin{split}
B(\ell(s; \theta))
& \approx B\left[ \sum_{z\in Z(s)} p(z; \theta  ) \indicator \{D(z)\geq \alpha \} - \beta\right] \\
& = \max (\sum_{z\in Z(s)} p(z; \theta \mid s) \indicator \{D(z)\geq \alpha \} - \beta, 0)            \\
& \approx \max ( \frac{1}{|Z(s)|} \sum_{z\in Z(s)}\indicator \{D(z)\geq \alpha \} - \beta, 0).         
\end{split}   
\end{align*}

Finally, the  gradient $	\nabla_{\theta} m (\Delta_2; \theta) $ is approximated by Monte Carlo approximation. 		The sample bound $\abs{Z}$ is determined by methods in stochastic programming \cite{de2004constraint}. 
Using the gradient descent algorithm we can update $\theta^{j}$ to $\theta^{j+1}$ until $\norm{\theta^{j+1} - \theta^{j}} \leq \epsilon$.  The step-sizes are updated using  the square summable step rule  \cite{boyd2003subgradient}, that is,
$	\eta_{i}^{k+1} = \frac{\eta_{i}^{k}}{k}, \quad \mbox{for }i=1,2.
$

After the inner optimization for \eqref{eq:inner} converges,  we update formula for multipliers $\lambda$ and $\xi$ as
\begin{align*}
& \lambda^{k+1} = \lambda^{k} + \nu^{k} \cdot  \Expect\limits_{\bs\sim \Delta_1} B(g(\bs;\theta^k)); \\
& \xi^{k+1} = \xi^{k} + \nu^{k} \cdot \Expect\limits_{\bs\sim \Delta_2} B(\ell(\bs; \theta^k)),
\end{align*}
where the expectations are approximated by Monte-Carlo methods. The penalty value $ \nu$ is updated using the rule in \cite{bertsekas1999nonlinear}.

\resizebox{0.48\textwidth}{!}{
\begin{minipage}{\linewidth}
\begin{align*}
& \nu^{k+1} = \begin{cases}b \nu^{k}
&\norm{\Expect_{\bs\sim \Delta_2} B(\ell(\bs; \theta^{k})} > \rho \norm{\Expect_{\bs\sim \Delta_2} B(\ell(\bs; \theta^{k-1})} \\
& \mbox{or } \\
& \norm{\Expect_{\bs\sim \Delta_1} B(g(\bs; \theta^{k})} > \rho \norm{\Expect_{\bs\sim \Delta_1} B(g(\bs; \theta^{k-1})} \\ \nu^{k} &\mbox{otherwise,}\end{cases}
\end{align*}
\end{minipage}
}
where $b>1$ to ensure  the sequence  $\nu^{k}$ are non-increasing and a typical choice of $\rho$ is 0.25. Note that  a different penalty parameter $\nu^k_i$ can be chosen for each constraint. For example, $\nu_1^k$ is chosen for constraint \eqref{eq:value} and $\nu^k_2$ is chosen for constraint \eqref{eq:chance}. Furthermore, the update of penalty parameter does not require any new samples, since  $\Expect_{\bs\sim \Delta_1} B(g(\bs; \theta^{k}) $ and $\Expect_{\bs \sim \Delta_2} B(\ell(\bs; \theta^{k}) $ can be evaluated based on sampled trajectories.

The following assumptions are required in the analysis of the convergence.
\begin{itemize}
\item[A1] For $y\in \{\eta_1,\eta_2\}$ , $y^k>0$ for each $k \ge 1$, $\sum_{k=1}^\infty y^k=\infty$, and $\sum_{n=1}^\infty (y^k)^2 < \infty$.
\item[A2] The value function approximation is continuously differentiable in $\theta$ and $\nabla_\theta V(\cdot;\theta)$ is locally  Lipschitz continuous.
\item [A3] There exists a feasible solution for \eqref{eq:rand-opt}.
\item [A4] For time-unbounded \ac{pctl}, the length of a sampled trajectory is lower bounded by either the time bound in \ac{pctl} formulas or the mixing time of the Markov chain with policy parameterized by $\theta$.
\end{itemize}
\begin{theorem} 
Assuming A1-A4, the sequence of value function updates converges almost surely (with probability 1) to a local optimal solution $\theta^\ast$ for the chance-constrained optimization problem in \eqref{eq:rand-opt}.
\end{theorem}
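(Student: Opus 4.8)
The plan is to treat the algorithm as a nested stochastic approximation scheme (an epoch-wise quadratic penalty method in the sense of \cite{bertsekas1999nonlinear}) and to analyze the inner gradient-descent loop and the outer multiplier/penalty loop separately, then combine them. For each fixed penalty parameter $\nu^k$ and multipliers $(\lambda^k, \xi^k)$, the inner loop approximately minimizes the augmented Lagrangian $L_{\nu^k}(\cdot, \lambda^k, \xi^k)$ by stochastic gradient descent, while the outer loop updates $(\lambda^k, \xi^k, \nu^k)$ by the stated rules. I would first show that the inner loop converges almost surely to a stationary point of $L_{\nu^k}$, and then invoke the classical convergence theory of penalty methods to show that the resulting sequence of inner solutions converges to a local optimum of the constrained problem in \eqref{eq:rand-opt}.

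For the inner loop, the first step is to verify that the Monte-Carlo estimators derived above are unbiased (hence consistent) estimators of $\nabla_\theta F(\theta)$ and $\nabla_\theta m(\Delta_2; \theta)$. This follows from the likelihood-ratio identity $\nabla_\theta p(h;\theta) = p(h;\theta)\,\nabla_\theta \log p(h;\theta)$ already used in the gradient derivation, together with an interchange of gradient and expectation that is justified by the differentiability and local Lipschitz continuity in A2. Writing the stochastic update as $\theta^{j+1} = \theta^{j} - \eta^{j}\bigl(\nabla_\theta L_{\nu^k}(\theta^{j}) + \zeta^{j+1}\bigr)$, where $\zeta^{j+1}$ is the zero-mean Monte-Carlo error, I would then apply the ODE method for stochastic approximation, or equivalently the Robbins--Siegmund supermartingale convergence theorem. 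The step-size condition A1, namely $\sum_{k}\eta^{k}=\infty$ and $\sum_{k}(\eta^{k})^2<\infty$, controls the accumulated noise, and A2 ensures that $\nabla_\theta L_{\nu^k}$ is locally Lipschitz, so the iterates asymptotically track the gradient flow of $L_{\nu^k}$ and converge almost surely to its set of stationary points.

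For the outer loop, I would appeal to the convergence theory for the quadratic penalty / augmented Lagrangian method in \cite{bertsekas1999nonlinear}: under the feasibility hypothesis A3, as the penalty $\nu^k$ is driven up by the stated update rule and the multipliers are refined by $\lambda^{k+1} = \lambda^{k} + \nu^{k}\,\Expect_{\bs\sim \Delta_1} B(g(\bs;\theta^{k}))$ and analogously for $\xi$, every limit point of the sequence of inner solutions $\{\theta^{k}\}$ is a KKT point, and therefore a local optimum, of \eqref{eq:rand-opt}. The main obstacle is bridging the two loops in the presence of sampling noise: I must establish that the error $\zeta^{j+1}$ has bounded conditional variance, so that $\{\zeta^{j+1}\}$ is a square-integrable martingale-difference sequence and the stochastic approximation machinery applies. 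This is precisely where A4 enters. Bounding each sampled trajectory length by the \ac{pctl} time horizon or by the mixing time of $M^{\theta}$ guarantees that the score terms $\sum_{t}\nabla_\theta \log \pi(a_t\mid s_t;\theta)$ and the cost estimates $D(z)$ are uniformly bounded, which yields finite-variance estimators and closes the argument; without a length bound the REINFORCE-type gradients could have unbounded variance and the almost-sure convergence would fail.
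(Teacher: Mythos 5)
The paper contains no proof to compare against: it dismisses the theorem with the single sentence that ``the convergence proof is standard for stochastic programming and omitted for space limitation.'' Your nested decomposition---a stochastic-approximation analysis of the inner gradient loop combined with classical quadratic-penalty/multiplier theory for the outer loop---is exactly the standard argument that remark gestures at, so your strategy is the natural reconstruction. However, two specific steps in your sketch fail as stated, and they are precisely the places where this problem is \emph{not} standard.

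First, the unbiasedness claim is false for the chance-constraint terms. The likelihood-ratio identity does give unbiased estimators of $\nabla_\theta F(\theta)$ (provided the transition model is available so that $g(s;\theta)=\calB V(s;\theta)-V(s;\theta)$ is computed exactly; otherwise there is an additional double-sampling bias in $g$ itself). But $\ell(s;\theta)=\Pr(D(s,K;\theta)\ge\alpha)-\beta$ is itself an expectation, and the quantities actually needed are nonlinear functions of it: $B(\ell(s;\theta))=\max\{\ell(s;\theta),0\}$ and products such as $B(\ell)\,\nabla_\ell B(\ell)\,\nabla_\theta\ell$ appearing in $\nabla_\theta m(\Delta_2;\theta)$. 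The paper's estimators plug the empirical frequency $\frac{1}{|Z(s)|}\sum_{z\in Z(s)}\indicator\{D(z)\ge\alpha\}-\beta$ from a \emph{single} trajectory set into $\max\{\cdot,0\}$ and into every factor of the product. By Jensen's inequality and the correlation between factors, these estimators are biased for every finite $|Z(s)|$, with bias of order $|Z(s)|^{-1/2}$. Consequently your noise term $\zeta^{j+1}$ is not a martingale-difference sequence, and Robbins--Siegmund/ODE arguments do not apply directly: you need either batch sizes growing along the iterations so that the accumulated bias $\sum_j \eta^j b_j$ is finite, or independent (double) sampling for the product terms. Neither requirement appears in A1--A4, so this must be added as a hypothesis or proved. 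A related technical point: $B(x)=\max\{x,0\}$ makes $L_{\nu}$ nonsmooth exactly on the constraint boundary $\{g=0\}\cup\{\ell=0\}$, so A2 does not give a locally Lipschitz $\nabla_\theta L_{\nu}$, and the gradient flow in your ODE argument must be replaced by a differential inclusion.

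Second, your outer-loop conclusion that every limit point is a KKT point of \eqref{eq:rand-opt} needs a constraint qualification, and A3 (mere existence of a feasible point) does not supply one. Worse, the randomized reformulation is intrinsically degenerate: the constraint function $\Expect_{\bs\sim\Delta_1}B(g(\bs;\theta))$ is nonnegative everywhere and vanishes exactly on the feasible set, so at every feasible point where it is differentiable its gradient is zero. Hence LICQ/MFCQ fail, the Lagrangian stationarity condition degenerates (it would force $\nabla_\theta$ of the objective to vanish), and the multiplier iterates $\lambda^{k+1}=\lambda^k+\nu^k\Expect B(g)$, which are monotonically nondecreasing, need not stay bounded. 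The classical theory in Bertsekas that you invoke therefore does not deliver KKT points of \eqref{eq:rand-opt}; one must either argue in terms of the original inequality-constrained problem \eqref{eq:adp}, or weaken the conclusion to feasibility of limit points together with stationarity of the penalized objective. Finally, a minor misreading: A4 imposes a \emph{lower} bound on trajectory length, whose role (via the mixing-time lemmas) is to make the finite-horizon chance constraint a faithful surrogate for the time-unbounded \ac{pctl} formula; it is not an upper bound for variance control, which instead follows simply from the finiteness of the sampled trajectories and of the state and action spaces.
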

The convergence proof is standard for stochastic programming and omitted for space limitation.


\section{Case Studies}\label{sec:exp}
We validate the algorithm in two motion planning problems modeled as stochastic gridworld problems, illustrated in Fig.~\ref{fig:simulation-two}: A simple reach-avoid task (a), and a planning problem with a \ac{pctl} constraint (b). In each state $s \in S$ and for robot's different actions (heading up (`U'), down (`D'), left (`L'), right (`R')), the probability of arriving at the correct cell is $1 - 0.1 \times N$, where $N$ is the number of the neighbors of the current state including itself. If the system hits the wall, it will be bounced back to its original cell.

\subsection{Planning without \ac{pctl} constraints} \label{exp:1} Without considering any constraints, the first experiment is designed to observe the
the relation between the weighting parameters and the approximation
error and to justify the choice of weights in \eqref{eq:adp}. The planning objective is to find an approximately optimal policy which drives the robot from
the initial position $s_{init}:[0,0]$ to the goal $s_{goal}: [8, 10]$ while
avoiding the obstacles. The reward is defined as the following: the robot receives a reward of 100 if $P(s_{goal} \mid s, a) > 0.5$.

The value function approximation is $V(s; \theta) = \Phi \theta$, where the basis functions $\Phi= [\phi_1,\phi_2,\ldots, \phi_{\calK}]^\intercal$ are \ac{ggks} \cite{sugiyama2015statistical}
defined as the following:
$ \Phi_{j}(s)  = K(s, c^{(j)}) $ 
and $K(s, s') = \exp(-\frac{SP(s,s')^2}{2\sigma^2}), $
where $\{c^{(j)}, j=1,\ldots, \calK\} $ is a set
of preselected centers. In this example, we select
the centers to be $\{(x,y)\mid x,y \in \{0,5,10\}\}$ and the variance $\sigma$ to be 5. The term $SP(s, s')$ refers to the shortest path from state $s$ to state $s'$ in the graph, assuming deterministic transitions. 
\begin{figure}
\vspace*{0.5cm}
\centering
\begin{subfigure}[h]{0.22\textwidth}
\includegraphics[width=\textwidth]{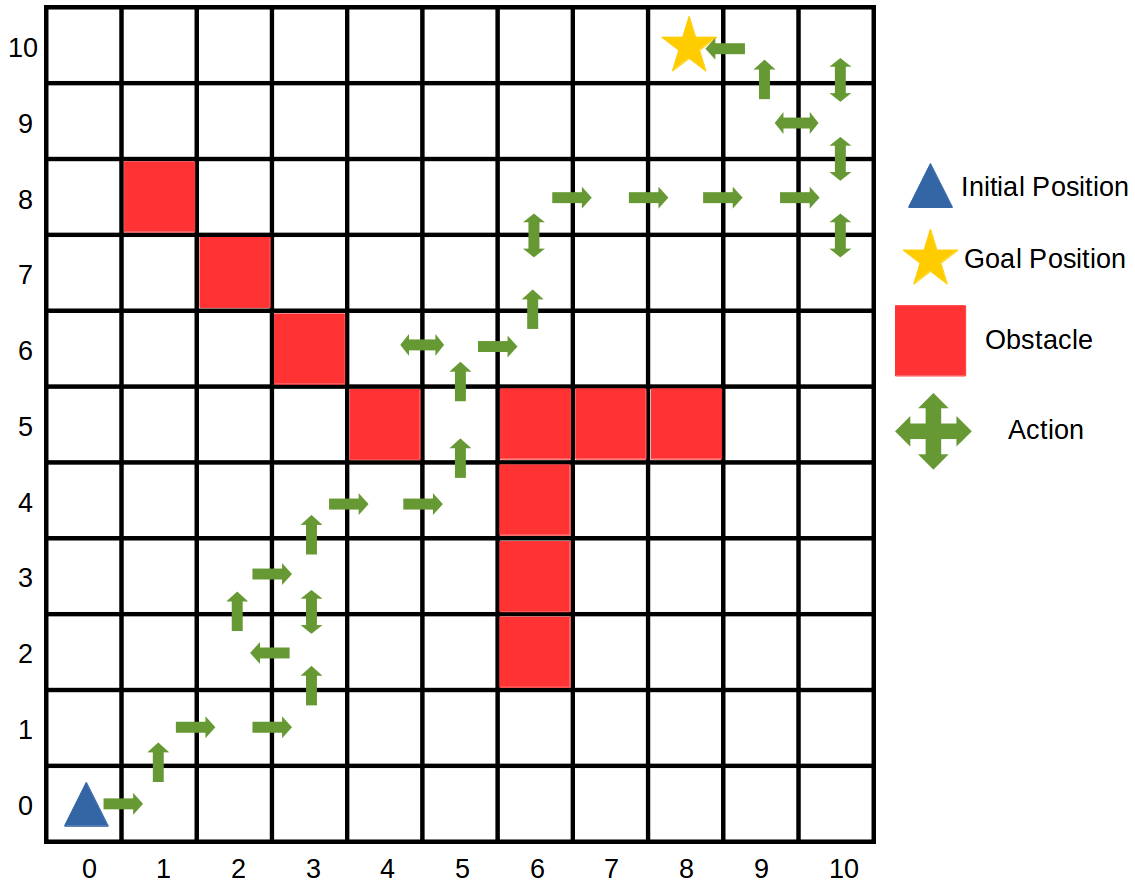}
\caption{}
\label{fig:simulation}
\end{subfigure}
\begin{subfigure}[h]{0.22\textwidth}
\includegraphics[width=\textwidth]{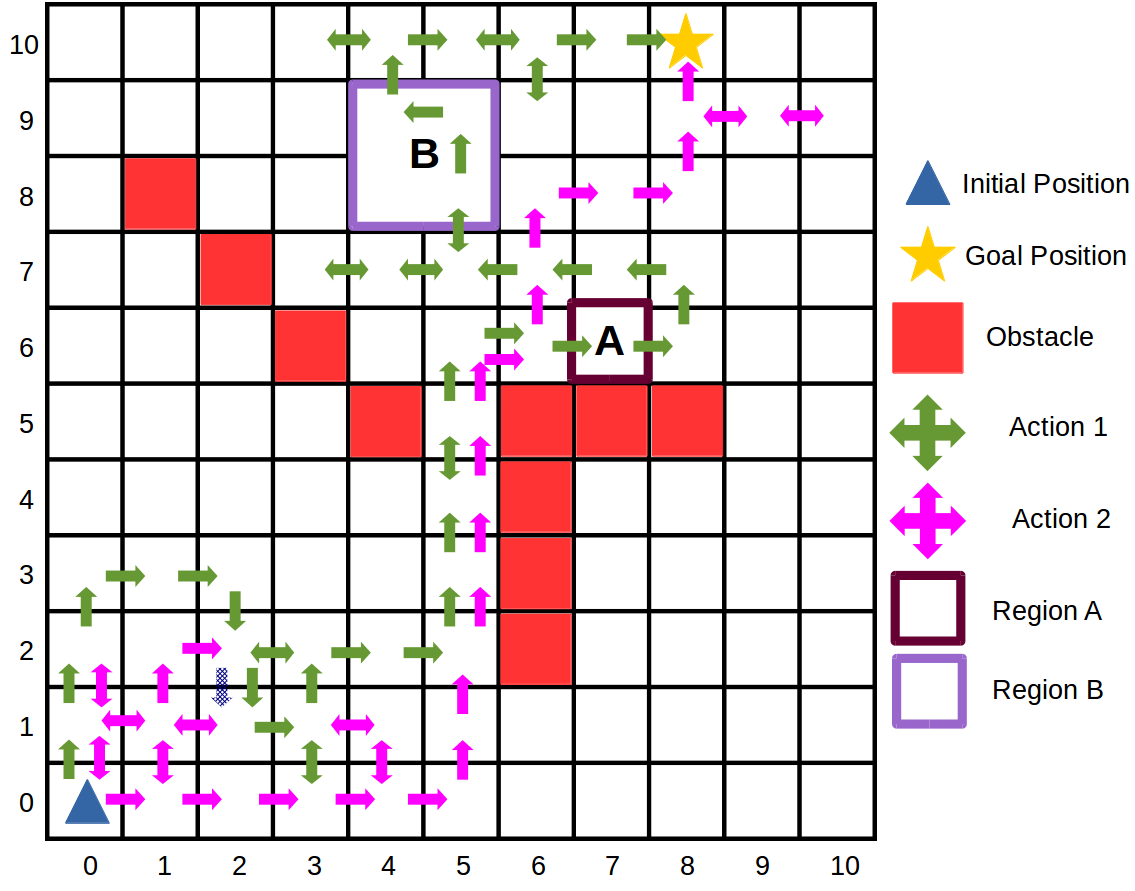}
\caption{}
\label{fig:simulation_c}
\end{subfigure}
\caption{(a) Gridworld with initial (triangle), goal (star), obstacles(solid squares), no \ac{pctl} constraints.  (b) The gridworld with \ac{pctl} constraint  $A \implies \Pr_{\geq \delta}(C_{\leq 13} \Eventually^{\le 15} B)$ where $A$ and $B$ are regions marked in the graph.}
\label{fig:simulation-two}
\end{figure}

The parameters used in the algorithm are the following (with respect to Section~\ref{sec:avi}):  the
temperature parameter $\tau=5$, $b=1.1$, $\eta_1=0.1$, the initial penalty parameter $\nu^0=10.0$, the initial Lagrangian multipliers $\lambda^0= 0$.  During each iteration, $30$ trajectories of length $\le 6$ are sampled. 
The stopping criterion for each inner optimization problem is $\norm{\nabla_{\theta}L_{\nu^k}(\theta^j, \lambda^k )} \leq \epsilon^{k} $, where $\{\epsilon^k\}$ is a positive sequence converging to $0$. In a single run,
the algorithm converges after $4$ outer
iterations, with $164, 24, 8, 1$ iterations for
each outer iteration, respectively.
\begin{figure}[t!]
\vspace*{-0.5cm}
\begin{subfigure}[h]{0.24\textwidth}
\includegraphics[trim=1cm 0cm 3.2cm 0cm, clip=true, width=\textwidth]{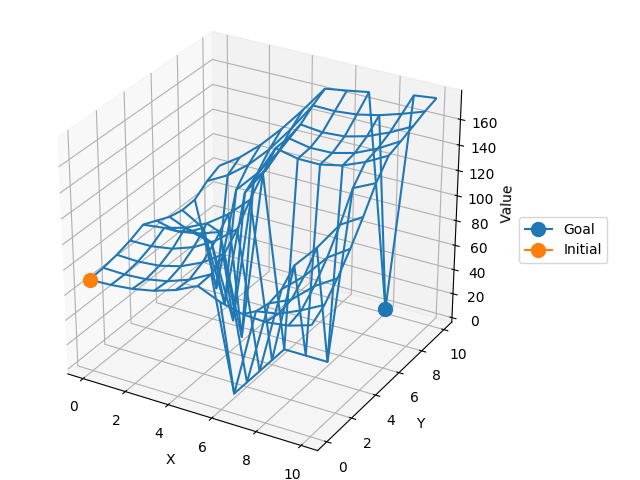}
\caption{}
\label{fig:surf_el}
\end{subfigure}
\begin{subfigure}[h]{0.24\textwidth}
\includegraphics[trim=1cm 0cm 3.2cm 0cm, clip=true, width=\textwidth]{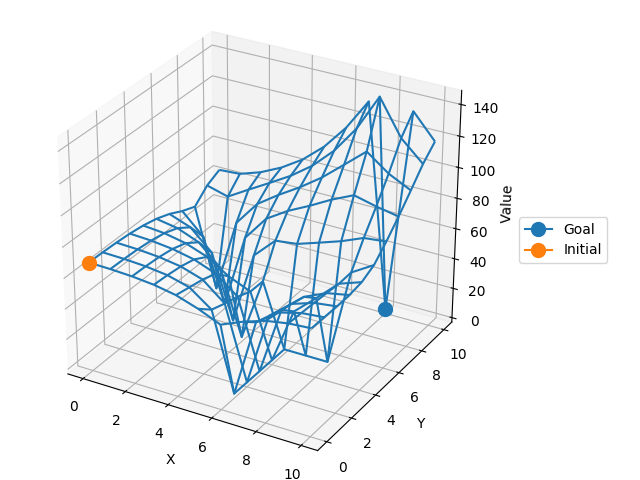}
\caption{ }
\label{fig:surf_vi}
\end{subfigure}
\caption{Value Functions. (a) Approximate value function obtained with the proposed \ac{adp} method. (b) The true value function obtained with softmax value iteration.} 
\label{fig:surf}
\vspace{-4ex}
\end{figure}
Fig.~\ref{fig:statistic_result} shows the result of $100$ independent experiments for solving the same planning problem starting with the same initial $\theta$ which is a zero vector. The black line represents the mean, the shaded area is limited by the maximum and minimum values over iterations, and the red line is the ground truth. The black line is always above the red line. This is expected as the value function approximation is an upper bound of the optimal value function.
\begin{figure}
\centering
\includegraphics[width=0.35\textwidth]{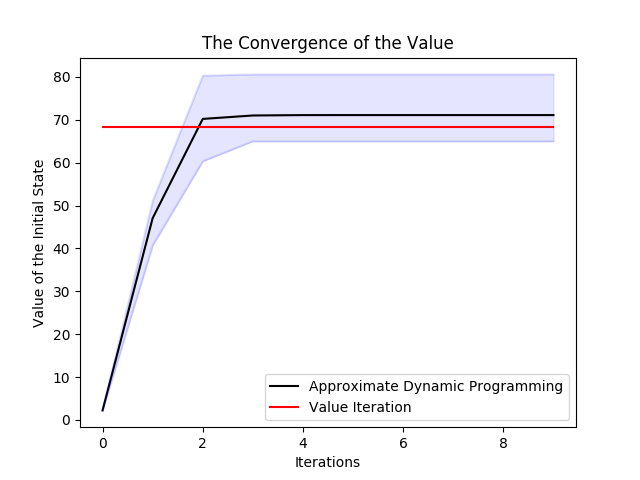}
\caption{The learning curve for the stochastic gridworld, averaged across 100 runs of the \ac{adp} algorithm with random initialization. }
\label{fig:statistic_result}
\end{figure}

To illustrate the approximation error, we compare the
optimal   and   approximate-optimal value
functions  in Fig.~\ref{fig:error_surf}, which plots the error $V(s; \theta) - V(s)^\ast$ on each individual state $s$. Fig.~\ref{fig:frequency_surf} shows state visitation frequency under the computed policy. It shows that for a state with a high visitation frequency under the optimal policy, the error tends to be very small. This result is expected due to our choice of weight parameters that have larger weights for states with high visitation frequencies.	Fig.~\ref{fig:simulation} shows one run generated by following the computed policy.
\begin{figure}[t!]
\vspace*{-0.5cm}
\centering
\begin{subfigure}[h]{0.22\textwidth}
\includegraphics[trim=2cm 0cm  2cm 0cm, clip=true, width=\textwidth]{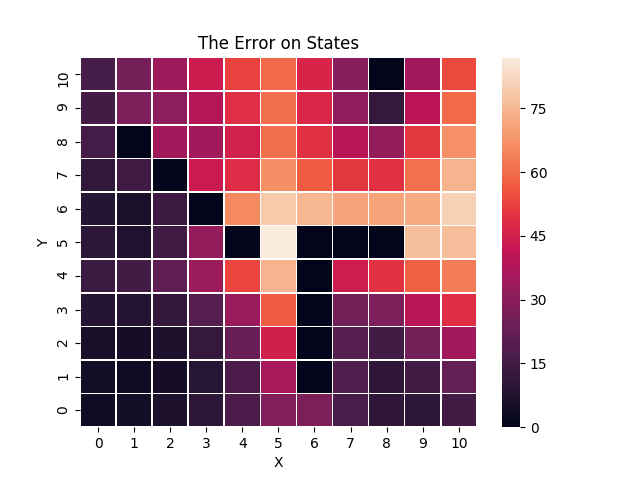}
\caption{}
\label{fig:error_surf}
\end{subfigure}\quad
\begin{subfigure}[h]{0.22\textwidth}
\includegraphics[trim=2cm 0cm  1.5cm 0cm, clip=true, width=\textwidth]{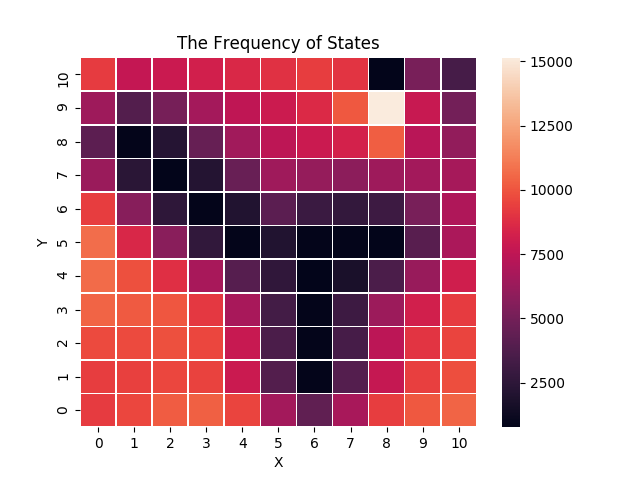}
\caption{}
\label{fig:frequency_surf}
\end{subfigure}
\caption{Comparison of state visitation frequency and the approximation error for different states (a) The heatmap of the error $V(s;\theta^\ast)- V^\ast(s)$. (b) The heatmap of state visitation frequencies under policy $\pi(\cdot;\theta)$.}
\vspace{-4ex}
\label{fig:error_frequency}
\end{figure}

\subsection{Planning with \ac{pctl} constraints} \label{exp:2}Consider  including a \ac{pctl} constraint $ A \implies \Pr_{\geq \delta}(\Next C_{\leq 13} \Eventually^{\le 14} B) $: When the agent visits $A$, then it will ensure, starting from the next state, with a probability at least $0.2$, to eventually visit region $B$ in less than 14 steps with a cost less than 13. Region A and B are shown in Fig.~\ref{fig:simulation_c}. Let $d: S \times A \rightarrow \reals$ be defined by $d(s,a) = 1$.

We use the same value function approximation, the same stopping criterion for the inner optimization, and the same set of parameters with different initial penalty parameters $\nu_1^0= 10.0$ and $\nu_2^0= 500$ for constraints \eqref{eq:value} and \eqref{eq:chance}, respectively. 
\begin{figure}
\centering
\includegraphics[width=0.35\textwidth]{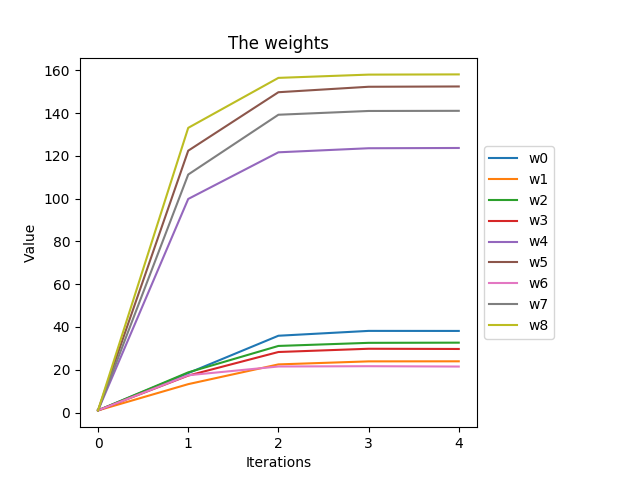}
\caption{The Convergence of Parameters}
\label{fig:weights}
\end{figure}
For each iteration, there are $30$ trajectories (of lengths $\le 6$) sampled and another $100$ trajectories (of lengths $= 15$) starting from the region A for the chance constraints. 
Given $\delta=0.2$, Fig.~\ref{fig:weights} shows the convergence of the parameters. Even by adding \ac{pctl} constraints, the algorithm converges after $4$ outer iterations. Fig.~\ref{fig:simulation_c} shows two trajectories simulated by following the computed policy. In this one sampled trajectory, the system reaches A and then B with a cost less than the given threshold. In another sampled trajectory, the system does not visit region A and directly goes to the goal.

We tested the algorithm with different values for the $\delta$. 
Table.~\ref{tb:different_delta} shows the frequencies of trajectories satisfying the cost constraint $\Next C_{\leq 13} \Eventually^{\le 14} B$ under different values for $\delta$ after the empirical evaluation of $20000$ trajectories starting from $A$. In all experiments, the \ac{pctl} constraint is satisfied. 
The result shows that as $\delta$ increases, the probability of satisfying the constraint also increases, but not monotonically. This can be caused by the chosen function approximation.
\begin{table} 
\caption{Frequencies of satisfying paths under different $\delta$.}
\label{tb:different_delta}
\begin{center}
\begin{tabular}{ccccc}
\hline			
$\delta$ & 0 & 0.1 & 0.2 & 0.3 \\ 
Num. of satisfying  paths & 3130& 2227 & 4581 & 7410 \\  
\% of satisfying paths & $0.15$ & $0.11$ & $0.23$ & $ 0.37$ \\ 
\hline			
\end{tabular}\vspace{-4ex}
\end{center}
\end{table}



\section{conclusion}
\label{sec:con}
We have presented an approximate value iteration method for \ac{mdp} with \ac{pctl} constraints. We proposed a method that translates \ac{pctl} constraints into chance constraints and uses stochastic programming for solving an upper bound of the optimal value function subject to constraints in \ac{pctl}. The almost sure convergence of the proposed algorithm is guaranteed under several assumptions.  There are several future directions enabled by this study: First, the current method only studies a class of \ac{pctl} for which memoryless policies are sufficient to be approximately optimal. We are interested in extending this method to a large class of temporal logic formulas, for which finite-memory is needed for optimality. Second, it is possible to develop distributed \ac{adp} using approximate value iteration based on decomposition-based planning in large-scale \ac{mdp}s. 




 \bibliographystyle{ieeetr}
\bibliography{refs}

\end{document}